\newcommand{\g}{\mathfrak{g}}
\newcommand{\HH}{\mathcal{H}}
\newcommand{\inner}[2]{\left\langle #1, #2 \right\rangle}
\newcommand{\iso}{\cong}
\renewcommand{\mid}{:}
\newcommand{\RC}{\operatorname{RC}} 
\newcommand{\wt}{\mathrm{wt}}
\newcommand{\ZZ}{\mathbf{Z}}
\newcommand{\virtual}[1]{\widehat{#1}}
\newcommand{\lcm}{\operatorname{lcm}} 
\definecolor{darkred}{rgb}{0.7,0,0} 
\newcommand{\defn}[1]{{\color{darkred}\emph{#1}}} 
\protected\def\specialmergetwolists{%
  \begingroup
  \@ifstar{\def\cnta{1}\@specialmergetwolists}
    {\def\cnta{0}\@specialmergetwolists}%
}
\def\@specialmergetwolists#1#2#3#4{%
  \def\tempa##1##2{%
    \edef##2{%
      \ifnum\cnta=\@ne\else\expandafter\@firstoftwo\fi
      \unexpanded\expandafter{##1}%
    }%
  }%
  \tempa{#2}\tempb\tempa{#3}\tempa
  \def\cnta{0}\def#4{}%
  \foreach \x in \tempb{%
    \xdef\cnta{\the\numexpr\cnta+1}%
    \gdef\cntb{0}%
    \foreach \y in \tempa{%
      \xdef\cntb{\the\numexpr\cntb+1}%
      \ifnum\cntb=\cnta\relax
        \xdef#4{#4\ifx#4\empty\else,\fi\x#1\y}%
        \breakforeach
      \fi
    }%
  }%
  \endgroup
}
\DeclareDocumentCommand\rpp{ m m g }{
	\foreach \x [count=\s from 1] in {#1}{
	        {\ifnum\s=1
	                \draw (0,-\s)--(\x,-\s);
	                \fi}
	   \draw (0,-\s-1) to (\x,-\s-1);
	   \foreach \y in {0, ..., \x} {\draw (\y,-\s)--(\y,-\s-1);}
	}
	\specialmergetwolists{/}{#1}{#2}\ziplist
	\foreach \x/\y [count=\yi from 1] in \ziplist{
	    \node[anchor=west,font=\scriptsize] at (\x,-\yi - .5) {$\y$};
	}
	\IfValueT {#3}
	{\foreach \z [count=\zi from 1] in {#3} {\node[anchor=east,font=\scriptsize] at (0,-\zi - .5) {$\z$};}}
	{}
}
\theoremstyle{plain}
\newtheorem{thm}{Theorem}[section]
\newtheorem{lemma}[thm]{Lemma}
\newtheorem{conj}[thm]{Conjecture}
\newtheorem{prop}[thm]{Proposition}
\theoremstyle{definition}
\newtheorem{dfn}[thm]{Definition}
\newtheorem{ex}[thm]{Example}
\newtheorem{remark}[thm]{Remark}
\numberwithin{equation}{section}
\numberwithin{figure}{section}
\numberwithin{table}{section}
\title{Rigged configurations for all symmetrizable types}
\author{Ben Salisbury}
\address{Department of Mathematics, Central Michigan University, Mt.\ Pleasant, MI 48859}
\email{ben.salisbury@cmich.edu}
\urladdr{http://people.cst.cmich.edu/salis1bt/}
\thanks{B.S.\ was partially supported by CMU Early Career grant \#C62847 and by Simons Foundation grant \#429950.}
\author{Travis Scrimshaw}
\address{Department of Mathematics, University of Minnesota, Minneapolis, MN 55455}
\email{tscrimsh@umn.edu}
\urladdr{http://math.umn.edu/~tscrimsh/}
\thanks{T.S.\ was partially supported by NSF grant OCI-1147247 and RTG grant NSF/DMS-1148634.}
\keywords{crystal, rigged configuration, Littlewood-Richardson rule}
\subjclass[2010]{05E10, 17B37}
\begin{document}

\maketitle

\begin{abstract}
In an earlier work, the authors developed a rigged configuration model for the crystal $B(\infty)$ (which also descends to a model for irreducible highest weight crystals via a cutting procedure).  However, the result obtained was only valid in finite types, affine types, and simply-laced indefinite types.  In this paper, we show that the rigged configuration model proposed does indeed hold for all symmetrizable types. As an application, we give an easy combinatorial condition that gives a Littlewood-Richardson rule using rigged configurations which is valid in all symmetrizable Kac-Moody types.

\bigskip\noindent \textbf{Keywords:} crystal; rigged configuration; Littlewood-Richardson rule
\end{abstract}

\maketitle

\section{Introduction}

The theory of crystal bases~\cite{K91} has provided a natural combinatorial framework to study the representations of Kac-Moody algebras (including classical Lie algebras) and their associated quantum groups.  Their applications span many areas of mathematics, and these diverse applications have compelled researchers to develop different combinatorial models for crystals which yield suitable settings to studying a particular aspect of the representation theory.  See, for example,~\cite{GL05,KN94,KS97,LP08,L95-2}.  The choice of using one model over the other usually depends on the underlying question at hand (and/or on the preference of the author).

We will be using \emph{rigged configurations}, whose origins lie in statistical mechanics. Specifically, they correspond naturally to the eigenvalues and eigenvectors of a Hamiltonian of a statistical model via the Bethe ansatz~\cite{B31,KKR86,KR86}.  As shown in~\cite{SS2015}, the rigged configuration model for $B(\infty)$ has simple combinatorial rules for describing the crystal structure which work in all finite, affine, and all simply-laced Kac-Moody types. These combinatorial rules are only based on the nodes of the Dynkin diagram and their neighbors.

If instead we use a corner transfer matrix approach to solve the Hamiltonian, the eigenvectors become indexed by one-dimensional lattice paths~\cite{B89,HKKOTY99,HKOTT02,NY97,SW99}, which can be interpreted as highest weight vectors in a tensor product of certain crystals known as Kirillov-Reshetikhin crystals. While not mathematically rigorous, these two different approaches suggests a bijection that has been constructed in numerous special cases. See, for example,~\cite{KKR86, KR86, KSS1999, OSS03II, OSSS16, SS15, Scrimshaw15}.
In~\cite{SS2016}, this bijection was extended to show that the rigged configuration model in~\cite{SS2015} and the marginally large tableaux model in~\cite{HL08} agree (for the appropriate types).

The purpose of this paper is to extend the crystal structure on rigged configurations $B(\infty)$ in terms of rigged configurations to all symmetrizable Kac-Moody types.  There are several known models for the crystal $B(\infty)$ in finite and affine types, but only a select few which are uniformly constructed to include all symmetrizable types (e.g., modified Nakajima monomials \cite{KKS07} and Littelmann paths \cite{L95-2}).  Having another model which works beyond finite and affine types is beneficial to studying the combinatorics of the associated representations, which, for example, has come up in the theory of automorphic forms (see \cite{SS2016} for an application of rigged configurations in finite type in this direction).

In~\cite{SS2015}, our proof relied on Schilling's result~\cite{S06} that the crystal structure on rigged configurations satisfied the Stembridge axioms~\cite{Stembridge03}. While the Stembridge axioms are necessary (local) conditions for highest weight crystals, they are only sufficient conditions in simply-laced types.  Then we used the technology of virtual crystals and well-known diagram foldings to extend our results to the other finite and affine types. Since rigged configurations are well-behaved under virtualization~\cite{SS2015, SS15}, the problem of showing rigged configurations model highest weight crystals and $B(\infty)$ for general symmetrizable type is reduced to determining a realization of every symmetrizable type as a diagram folding of a simply-laced type.

It is known that every Cartan type can be realized using a simple graph together with a graph automorphism~\cite{Lusztig93}.  We can realize this graph as a Dynkin diagram of a symmetric type, where the number of edges between vertices $v_i$ and $v_j$ gives the (negative of the) $(i,j)$-entry of the corresponding symmetric Cartan matrix, and the automorphism as a digram folding. Therefore, we can use the corresponding embedding of root lattices and~\cite[Thm.~5.1]{K96} to show there exists a virtualization of a crystal of any symmetrizable type into a crystal of symmetric type.  An explicit virtualization map using Nakajima's and Lusztig's quiver varieties was proven in~\cite{Savage05} in this case.

In this note, we modify the construction in~\cite{Lusztig93} so that the resulting graph is simple, where it can be considered as a simply-laced type. We then use the aforementioned virtualization map to prove an open conjecture (see Conjecture~\ref{conj:RC_virtualization}) stated by the authors that the rigged configuration model for $B(\infty)$ and highest weight crystals $B(\lambda)$ defined in~\cite{SS2015} may be extended to the case of arbitrary symmetrizable Kac-Moody algebras. Furthermore, we expect our results could to lead to a solution to the open problem of determining an analog of the Stembridge axioms for non-simply-laced types, where the only known results are for type $B_2$~\cite{DKK09,Sternberg07}. Indeed, our results allow for a direct link between the crystal operators and a simply-laced type where the Stembridge axioms apply.

The organization of the paper goes as follows.  In Section \ref{sec:background}, we set our notation and recall basic notions about crystals, rigged configurations, and virtualization.  In Section \ref{sec:fold}, we define the diagram folding required to prove our conjecture from \cite{SS2015} in Section \ref{sec:generalRC}.  Lastly, in Section \ref{sec:LR}, we stage the famous Littlewood-Richardson rule for decomposing tensor products of irreducible highest weight crystals in terms of the rigged configuration model.

\section{Background}
\label{sec:background}

We give a background on crystals, virtual crystals, and rigged configurations.

\subsection{Crystals}

Let $\g$ be a symmetrizable Kac-Moody algebra with index set $I$, generalized Cartan matrix $A = (A_{ij})_{i,j\in I}$, weight lattice $P$, root lattice $Q$, fundamental weights $\{\Lambda_i \mid i \in I\}$, simple roots $\{\alpha_i \mid i\in I\}$, and simple coroots $\{h_i \mid i\in I\}$.  There is a canonical pairing $\langle\ ,\ \rangle\colon P^\vee \times P \longrightarrow \ZZ$ defined by $\langle h_i, \alpha_j \rangle = A_{ij}$, where $P^{\vee}$ is the dual weight lattice.

An \defn{abstract $U_q(\g)$-crystal} is a nonempty set $B$ together with maps
\[
\wt \colon B \longrightarrow P, \ \ \
\varepsilon_i, \varphi_i\colon B \longrightarrow \ZZ\sqcup\{-\infty\}, \ \ \
e_i, f_i \colon B \longrightarrow B\sqcup\{0\},
\]
satisfying certain conditions.
The $e_i$ and $f_i$ for $i \in I$ are referred to as the \defn{Kashiwara raising} and \defn{Kashiwara lowering operators}, respectively. See \cite{HK02,K91} for details.  The models used in this paper will be specific, and therefore we will give details related to those models in the subsequent sections.

We say an abstract $U_q(\g)$-crystal is simply a \defn{$U_q(\g)$-crystal} if it is crystal isomorphic to the crystal basis of an integrable $U_q(\g)$-module.

Again let $B_1$ and $B_2$ be abstract $U_q(\g)$-crystals.  The tensor product $B_2 \otimes B_1$ is defined to be the Cartesian product $B_2\times B_1$ equipped with crystal operations defined by
\begin{subequations}
\label{eq:tensor_product_rule}
\begin{align}
\label{eq:tensor_product_rule_e} e_i(b_2 \otimes b_1) &= \begin{cases}
e_i b_2 \otimes b_1 & \text{if } \varepsilon_i(b_2) > \varphi_i(b_1) \\
b_2 \otimes e_i b_1 & \text{if } \varepsilon_i(b_2) \le \varphi_i(b_1),
\end{cases} \\
\label{eq:tensor_product_rule_f} f_i(b_2 \otimes b_1) &= \begin{cases}
f_i b_2 \otimes b_1 & \text{if } \varepsilon_i(b_2) \ge \varphi_i(b_1) \\
b_2 \otimes f_i b_1 & \text{if } \varepsilon_i(b_2) < \varphi_i(b_1),
\end{cases} \\ 
\label{eq:tensor_product_rule_weight} \wt(b_2 \otimes b_1) &= \wt(b_2) + \wt(b_1).
\end{align}
\end{subequations}

\begin{remark}
Our convention for tensor products is opposite the convention given by Kashiwara in~\cite{K91}.
\end{remark}

\subsection{Rigged configurations}

Set $\HH = I \times \ZZ_{>0}$. Consider $\lambda \in P^+ \cup \{\infty\}$ and a sequence of partitions $\nu = (\nu^{(a)} \mid a \in I)$. Let $m_i^{(a)}$ be the number of parts of length $i$ in $\nu^{(a)}$. Define the \defn{vacancy numbers} of $\nu$ to be 
\begin{equation}
\label{eq:vacancy}
p_i^{(a)}(\nu; \lambda) = p_i^{(a)} = c^{(a)} - \sum_{(b,j) \in \HH} A_{ab} \min(i, j) m_j^{(b)},
\end{equation}
where $\lambda = \sum_{a \in I} c^{(a)} \Lambda_a$ or $c^{(a)} = 0$ if $\lambda = \infty$. In addition, we can extend the vacancy numbers to
\[
p_{\infty}^{(a)} = c^{(a)} - \sum_{b \in I} A_{ab} \lvert \nu^{(b)} \rvert
\]
as the limit of $p_i^{(a)}$ as $i \to \infty$ since $\sum_{j=1}^{\infty} \min(i,j) m_j^{(b)} = \lvert \nu^{(b)} \rvert$ for $i \gg 1$.

Recall that a partition is a multiset of integers (typically sorted in weakly decreasing order).  More generally, a \defn{rigged partition} is a multiset of pairs of integers $(i, x)$ such that $i > 0$ (typically sorted under weakly decreasing lexicographic order).  Each $(i,x)$ is called a \defn{string}, while $i$ is called the length or size of the string and $x$ is the \defn{rigging}, \defn{label}, or \defn{quantum number} of the string.  Finally, a \defn{rigged configuration} is a pair $(\nu, J)$ where $J = \big( J_i^{(a)} \big)_{(a, i) \in \HH}$, where each $J_i^{(a)}$ is a weakly decreasing sequence of riggings of strings of length $i$ in $\nu^{(a)}$. We call a rigged configuration \defn{$\lambda$-valid}, for $\lambda \in P^+ \cup \{\infty\}$, if every label $x \in J_i^{(a)}$ satisfies the inequality $p_i^{(a)}(\nu; \lambda) \geq x$ for all $(a, i) \in \HH$. We say a rigged configuration is \defn{highest weight} if $x \geq 0$ for all labels $x$. Define the \defn{colabel} or \defn{coquantum number} of a string $(i,x)$ to be $p_i^{(a)} - x$.  
For brevity, we will often denote the $a$th part of $(\nu,J)$ by $(\nu,J)^{(a)}$ (as opposed to $(\nu^{(a)},J^{(a)})$).

\begin{dfn}
Let $(\nu_\emptyset,J_\emptyset)$ be the rigged configuration with empty partition and empty riggings. Define $\RC(\infty)$ to be the graph generated by $(\nu_\emptyset,J_\emptyset)$, $e_a$, and $f_a$, for $a \in I$, where $e_a$ and $f_a$ acts on elements $(\nu,J)$ in $\RC(\infty)$ as follows. Fix $a \in I$ and let $x$ be the smallest label of $(\nu,J)^{(a)}$.
\begin{itemize}
\item[\defn{$e_a$}:] If $x \geq 0$, then set $e_a(\nu,J) = 0$. Otherwise, let $\ell$ be the minimal length of all strings in $(\nu,J)^{(a)}$ which have label $x$.  The rigged configuration $e_a(\nu,J)$ is obtained by replacing the string $(\ell, x)$ with the string $(\ell-1, x+1)$ and changing all other labels so that all colabels remain fixed.
\item[\defn{$f_a$}:] If $x > 0$, then add the string $(1,-1)$ to $(\nu,J)^{(a)}$.  Otherwise, let $\ell$ be the maximal length of all strings in $(\nu,J)^{(a)}$ which have label $x$.   Replace the string $(\ell, x)$ by the string $(\ell+1, x-1)$ and change all other labels so that all colabels remain fixed.
\end{itemize}
The remaining crystal structure on $\RC(\infty)$ is given by
\begin{subequations}\label{epphiwt}
\begin{align}
\label{Binf_ep} \varepsilon_a(\nu,J) &= \max\{ k \in \ZZ_{\ge0} \mid  e_a^k(\nu,J) \neq 0 \}, \\ 
\label{Binf_phi} \varphi_a(\nu,J) &= \varepsilon_a(\nu,J) + \langle h_a,\wt(\nu,J)\rangle, \\ 
\label{Binf_wt} \wt(\nu,J) &= -\sum_{(a,i)\in\HH} im_i^{(a)}\alpha_a = -\sum_{a\in I} |\nu^{(a)}|\alpha_a.
\end{align}
\end{subequations}
\end{dfn}

It is worth noting that, in this case, the definition of the vacancy numbers reduces to
\begin{equation}
p_i^{(a)}(\nu) = p_i^{(a)} = -\sum_{(b,j) \in \HH} A_{ab} \min(i, j) m_j^{(b)}.
\end{equation}
Moreover, we have $\inner{h_a}{\wt(\nu,J)} = p_{\infty}^{(a)}$ from the crystal structure.

\begin{ex}
Let $\g$ be of type $A_1$, then $(\nu, J) \in \RC(\infty)$ given by $(\nu, J) = f_1^k (\nu_{\emptyset}, J_{\emptyset})$ is the partition $\nu^{(1)} = k$ and the rigging $J_k^{(1)} = \{-k\}$.
\end{ex}

\begin{ex}
The top of the crystal $\RC(\infty)$ in type $A_2$ is shown in Figure~\ref{fig:A2}. We note that we write the rigging on the right of each row and the respective vacancy number on the left.
\end{ex}

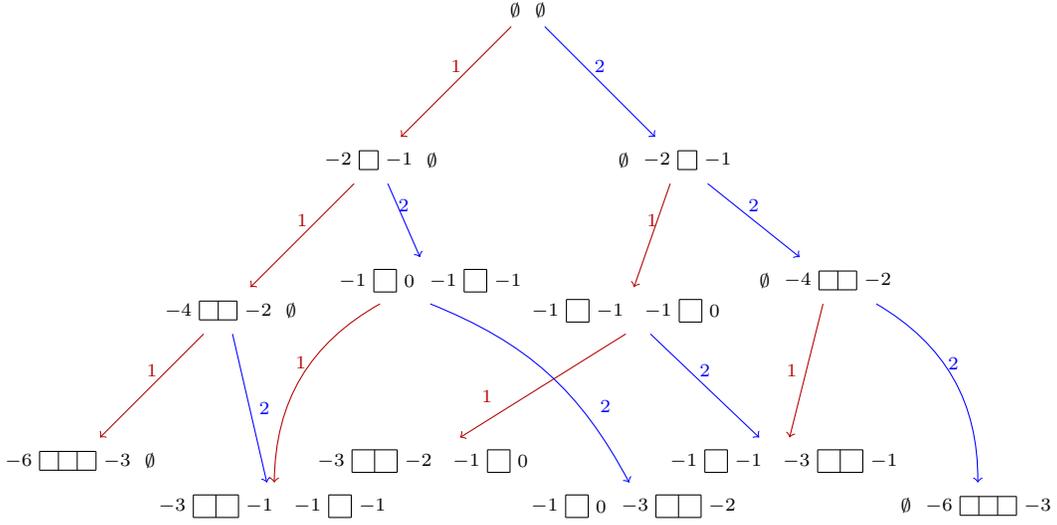
\begin{figure}[ht]
\[
\begin{tikzpicture}[yscale=2,xscale=2,font=\scriptsize]
\node (000) at (0,0) {$\emptyset \ \ \emptyset$};
\node (100) at (-1,-1) 
 {$\begin{tikzpicture}[scale=.25,baseline=-13]
   \rpp{1}{-1}{-2}
  \end{tikzpicture} \ \emptyset$};
\node (010) at (1,-1) 
 {$\emptyset \ \begin{tikzpicture}[scale=.25,baseline=-13]
   \rpp{1}{-1}{-2}
  \end{tikzpicture}$};
\node (200) at (-2,-2) 
 {$\begin{tikzpicture}[scale=.25,baseline=-13]
   \rpp{2}{-2}{-4}
  \end{tikzpicture} \ \emptyset$};
\node (011) at (-.65,-1.8) 
 {$\begin{tikzpicture}[scale=.3,baseline=-16]
   \rpp{1}{0}{-1}
  \begin{scope}[xshift=4cm]
   \rpp{1}{-1}{-1}
  \end{scope}
  \end{tikzpicture}$};
\node (110) at (.65,-2) 
 {$\begin{tikzpicture}[scale=.3,baseline=-16]
   \rpp{1}{-1}{-1}
  \begin{scope}[xshift=5cm]
   \rpp{1}{0}{-1}
  \end{scope}
  \end{tikzpicture}$};
\node (020) at (2,-1.8) 
 {$\emptyset \ 
  \begin{tikzpicture}[scale=.25,baseline=-13]
   \rpp{2}{-2}{-4}
  \end{tikzpicture}$};
\node (300) at (-3,-3) {$\begin{tikzpicture}[scale=.25,baseline=-13]
 \rpp{3}{-3}{-6}
\end{tikzpicture} \ \emptyset$};
\node (111) at (-1.7,-3.3) {$\begin{tikzpicture}[scale=.3,baseline=-16]
 \rpp{2}{-1}{-3}
\begin{scope}[xshift=6cm]
 \rpp{1}{-1}{-1}
\end{scope}
\end{tikzpicture}$};
\node (210) at (-.7,-3) {$\begin{tikzpicture}[scale=.3,baseline=-16]
 \rpp{2}{-2}{-3}
\begin{scope}[xshift=6cm]
 \rpp{1}{0}{-1}
\end{scope}
\end{tikzpicture}$};
\node (021) at (.7,-3.3) {$\begin{tikzpicture}[scale=.3,baseline=-16]
 \rpp{1}{0}{-1}
\begin{scope}[xshift=4cm]
 \rpp{2}{-2}{-3}
\end{scope}
\end{tikzpicture}$};
\node (120) at (1.7,-3) {$\begin{tikzpicture}[scale=.3,baseline=-16]
 \rpp{1}{-1}{-1}
\begin{scope}[xshift=5cm]
 \rpp{2}{-1}{-3}
\end{scope}
\end{tikzpicture}$};
\node (030) at (3,-3.3) {$\emptyset \ \begin{tikzpicture}[scale=.25,baseline=-13]
 \rpp{3}{-3}{-6}
\end{tikzpicture}$};
\path[->,font=\scriptsize,color=darkred]
 (000) edge node[above]{$1$} (100)
 (100) edge node[above]{$1$} (200)
 (010) edge node[above]{$1$} (110)
 (200) edge node[above]{$1$} (300)
 (011) edge[bend right] node[above]{$1$} (111)
 (110.270) edge node[above left,near end]{$1$} (210)
 (020) edge node[left]{$1$} (120);
\path[->,font=\scriptsize,color=blue] 
 (000) edge node[above]{$2$} (010)
 (100) edge node[above]{$2$} (011)
 (010) edge node[above]{$2$} (020)
 (200) edge node[right]{$2$} (111)
 (011.270) edge[bend left=20] node[near end, above right]{$2$} (021.100)
 (110) edge node[above]{$2$} (120)
 (020) edge[bend left] node[above]{$2$} (030);
\end{tikzpicture}
\]
\caption{The top of the crystal $\RC(\infty)$ for $\g = \mathfrak{sl}_3$.}
\label{fig:A2}
\end{figure}

\begin{thm}[\cite{SS2015}]
\label{thm:RCinf_nsl}
Let $\g$ be of simply-laced type. The map defined by $(\nu_\emptyset,J_\emptyset) \mapsto u_\infty$, where $u_\infty$ is the highest weight element of $B(\infty)$, is a $U_q(\g)$-crystal isomorphism $\RC(\infty) \cong B(\infty)$.
\end{thm}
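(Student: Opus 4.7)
The plan is to leverage Stembridge's local characterization of simply-laced highest-weight crystals and reduce to the finite case: in simply-laced type, a connected crystal satisfying the Stembridge axioms with a unique highest-weight vector of weight $\lambda$ is necessarily $B(\lambda)$. My strategy is therefore to pass through the family of finite rigged-configuration crystals $\RC(\lambda)$, $\lambda \in P^+$, invoking Schilling's theorem \cite{S06} to identify each $\RC(\lambda)$ with $B(\lambda)$, and then to take a direct limit to obtain $\RC(\infty) \cong B(\infty)$.

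First, I would check that the Kashiwara operators $e_a, f_a$ of the definition really produce rigged configurations from rigged configurations; this is a direct computation of how individual riggings and the vacancy numbers $p_i^{(a)}$ transform when a single string is added or lengthened, using crucially that $A_{ab} \in \{2, 0, -1\}$ in the simply-laced case. Then one verifies that $(\nu_\emptyset, J_\emptyset)$ has weight $0$ and satisfies $\varepsilon_a(\nu_\emptyset, J_\emptyset) = 0$ for every $a \in I$, and that it is the unique such element; connectedness of $\RC(\infty)$ is immediate from its definition as the graph generated by $(\nu_\emptyset, J_\emptyset)$ under the $e_a$ and $f_a$. Next, for each $\lambda \in P^+$, let $\RC(\lambda) \subseteq \RC(\infty)$ be the subset of $\lambda$-valid rigged configurations; the crystal operators agree with those of $\RC(\infty)$ except that $f_a$ outputs $0$ whenever the result would violate $\lambda$-validity, and the weight is shifted by $\lambda$. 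By Schilling's theorem, the crystal $\RC(\lambda)$ satisfies the Stembridge axioms, hence in simply-laced type is isomorphic to $B(\lambda)$ via the unique crystal morphism sending $(\nu_\emptyset, J_\emptyset)$ to $u_\lambda$. Finally, since any fixed rigged configuration is $\lambda$-valid for all sufficiently dominant $\lambda$, and the embeddings $\RC(\lambda) \hookrightarrow \RC(\lambda + \mu)$ obtained by relaxing the validity bound intertwine the crystal operators on their images, the isomorphisms $\RC(\lambda) \cong B(\lambda)$ assemble (after the standard weight shifts by $-\lambda$) into an isomorphism of $\RC(\infty)$ with Kashiwara's direct-limit realization of $B(\infty)$.

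The main obstacle I expect is Schilling's Stembridge-axiom verification itself, which reduces to a delicate case analysis of how the ``minimal/maximal string with label $x$'' rule for $e_a$ and $f_a$ interacts when two distinct indices $a \neq b$ with $A_{ab} = -1$ act in succession: one has to track precisely how a change to $(\nu, J)^{(a)}$ alters the vacancy numbers on $(\nu, J)^{(b)}$, and in particular how the location of the ``smallest label'' string on the neighbouring rigged partition shifts. A secondary technical point is checking that the cut-off procedure intertwines the crystal structures in the precise sense required for the direct-limit step, so that the lowering operator on $\RC(\lambda)$ is obtained from the one on $\RC(\infty)$ merely by post-composing with the indicator of $\lambda$-validity of the target; without this compatibility, the family of identifications $\RC(\lambda) \cong B(\lambda)$ would not patch together into the required isomorphism at the limit.
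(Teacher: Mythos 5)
Your proposal follows essentially the same route as the paper's (cited) proof from \cite{SS2015}: invoke Schilling's verification \cite{S06} that the rigged configuration crystal operators satisfy the Stembridge axioms, which suffice in simply-laced type to identify each $\RC(\lambda)$ with $B(\lambda)$, and then pass to $B(\infty)$ via the direct-limit/cutting construction. The only caveat worth noting is that Schilling's result is stated for finite simply-laced types, so one must observe (as the authors do) that the Stembridge verification is local to rank-two subdiagrams and hence extends to all simply-laced Kac--Moody types.
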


We can extend the crystal structure on rigged configurations to model $B(\lambda)$ as follows. We consider the subcrystal $\RC(\lambda) := \{ (\nu, J) \in \RC(\infty) \mid \text{$(\nu, J)$ is $\lambda$-valid}\}$ for any $\lambda \in P^+$. We have to modify the definition of the weight to be $\wt'(\nu, J) = \wt(\nu, J) + \lambda$. Thus the crystal operators become $f_a(\nu, J) = 0$ if $\varphi_a(\nu, J) = 0$, or equivalently if the result under $f_a$ above is not a $\lambda$-valid rigged configuration. This arises from the natural projection of $B(\infty) \longrightarrow B(\lambda)$.

\subsection{Virtual crystals}

A \defn{diagram folding} is a surjective map $\phi\colon \virtual{I} \longrightarrow I$ between index sets of Kac-Moody algebras and a set $(\gamma_a \in \ZZ_{>0} \mid a \in I)$ of \defn{scaling factors}.  One may induce a map from $\phi$ on the corresponding weight lattices $\Psi \colon P \longrightarrow \virtual{P}$ by asserting
\begin{equation}
\label{eq:weight_embedding}
\Lambda_a \mapsto \gamma_a \sum_{b \in \phi^{-1}(a)} \virtual{\Lambda}_b.
\end{equation}
If $\g \lhook\joinrel\longrightarrow \virtual{\g}$ is the corresponding embedding of symmetrizable Kac-Moody algebras, then it induces an injection $v \colon B(\lambda) \lhook\joinrel\longrightarrow B(\lambda^v)$ as sets, where $\lambda^v := \Psi(\lambda)$. However, there is additional structure on the image under $v$ as a \defn{virtual crystal}, where $e_a$ and $f_a$ are defined on the image as
\begin{equation}
\label{eq:virtual_crystal_ops}
e^v_a = \prod_{b \in \phi^{-1}(a)} \virtual{e}_b^{\,\gamma_a}
\quad \quad
\text{ and }
\quad \quad
f^v_a = \prod_{b \in \phi^{-1}(a)} \virtual{f}_b^{\,\gamma_a},
\end{equation}
respectively, and commute with $v$~\cite{baker2000,OSS03III,OSS03II}. These are known as the \defn{virtual Kashiwara (crystal) operators}. It is shown in~\cite{K96} that for any $a \in I$ and $b,b^{\prime} \in \phi^{-1}(a)$ we have $e_b e_{b^{\prime}} = e_{b^{\prime}} e_b$ and $f_b f_{b^{\prime}} = f_{b^{\prime}} f_b$ as operators (recall that $b$ and $b^{\prime}$ are not connected), so both $e^v_a$ and $f^v_b$ are well-defined. The inclusion map $v$ also satisfies the following commutative diagram.
\begin{equation}
\label{eq:virtual_weight}
\begin{tikzpicture}[xscale=4, yscale=1.5, text height=1.9ex, text depth=0.25ex, baseline=.75cm]
\node (1) at (0,1) {$B(\lambda)$};
\node (2) at (1,1) {$B(\lambda^v)$};
\node (3) at (0,0) {$P$};
\node (4) at (1,0) {$\virtual{P}$};
\path[right hook->,font=\scriptsize]
 (1) edge node[above,inner sep=2]{$v$} (2)
 (3) edge node[below,inner sep=1]{$\Psi$} (4);
\path[->,font=\scriptsize]
 (1) edge node[left] {$\wt$} (3)
 (2) edge node[right]{$\virtual{\wt}$} (4);
\end{tikzpicture}
\end{equation}
In~\cite{baker2000}, it was shown that this defines a $U_q(\g)$-crystal structure on the image of $v$.  More generally, we define a virtual crystal as follows.

\begin{dfn}\label{def:virtual}
Consider any symmetrizable types $\g$ and $\virtual{\g}$ with index sets $I$ and $\virtual{I}$, respectively. Let $\phi \colon \virtual{I} \longrightarrow I$ be a surjection such that $b$ is not connected to $b^{\prime}$ for all $b,b^{\prime} \in \phi^{-1}(a)$ and $a \in I$. Let $\virtual{B}$ be a $U_q(\virtual{\g})$-crystal and $V \subseteq \virtual{B}$.  Let $\gamma = (\gamma_a \in \ZZ_{>0} \mid a \in I)$ be the scaling factors. A \defn{virtual crystal} is the quadruple $(V, \virtual{B}, \phi, \gamma)$ such that $V$ has an abstract $U_q(\g)$-crystal structure defined using the Kashiwara operators $e_a^v$ and $f_a^v$ from \eqref{eq:virtual_crystal_ops} above,
\begin{align*}
\varepsilon_a(x) &:= \frac{\virtual{\varepsilon}_b(x)}{\gamma_a}, & 
\varphi_a(x) &:= \frac{\virtual{\varphi}_b(x)}{\gamma_a}, &\text{ for all } b\in \phi^{-1}(a) \text{ and } x \in V,
\end{align*}
and $\wt := \Psi^{-1} \circ \virtual{\wt}$.
\end{dfn}
We say $B$ \defn{virtualizes} in $\virtual{B}$ if there exists a $U_q(\g)$-crystal isomorphism $v \colon B \longrightarrow V$.  The resulting isomorphism is called the \defn{virtualization map}. We denote the quadruple $(V,\virtual{B},\phi,\gamma)$ simply by $V$ when there is no risk of confusion.

The virtualization map $v$ from rigged configurations of type $\g$ to rigged configurations of type $\virtual{\g}$ is defined by
\begin{equation}
\label{eq:virtual_rc}
\virtual{m}_{\gamma_a i}^{(b)} = m_i^{(a)}, \qquad \virtual{J}_{\gamma_a i}^{(b)} = \gamma_a J_i^{(a)}, 
\end{equation}
for all $b \in \phi^{-1}(a)$.  A $U_q(\g)$-crystal structure on rigged configurations is defined by using virtual crystals \cite{OSS03II}. Moreover, we use Equation~\eqref{eq:virtual_rc} to describe the virtual image of the type $\g$ rigged configurations into type $\virtual{\g}$ rigged configurations. Explicitly $(\virtual{\nu}, \virtual{J}) \in V$ if and only if
\begin{enumerate}
\item $\virtual{m}_i^{(b)} = \virtual{m}_i^{(b^{\prime})}$ and $\virtual{J}_i^{(b)} = \virtual{J}_i^{(b^{\prime})}$ for all $b, b^{\prime} \in \phi^{-1}(a)$,
\item $\virtual{J}^{(b)}_i \in \gamma_a \ZZ$ for all $b \in \phi^{-1}(a)$, and
\item $\virtual{m}^{(b)}_i = 0$ and $\virtual{J}^{(b)}_i = 0$ for all $i \notin \gamma_a \ZZ$ for all $b \in \phi^{-1}(a)$.
\end{enumerate}

Next, we recall~\cite[Conj.~5.12]{SS2015}.

\begin{conj}
\label{conj:RC_virtualization}
For all $\g$ of symmetrizable type, there exists a simply-laced type $\virtual{\g}$ and diagram folding $\phi$ with scaling factors $(\gamma_a \in \ZZ_{>0} \mid a \in I)$ such that $\RC(\lambda)$ virtualizes in $\RC(\lambda^v)$ under the virtualization map given by Equation~\eqref{eq:virtual_rc}.
\end{conj}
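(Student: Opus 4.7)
The plan is to combine the simply-laced case (Theorem~\ref{thm:RCinf_nsl}) with the explicit diagram folding to be constructed in Section~\ref{sec:fold}. The central point is that once we have a simply-laced ambient type $\virtual{\g}$ and a folding $(\phi,\gamma)$ realizing $\g$ as a subalgebra in the sense of Kashiwara~\cite{K96}, all that remains is to check that the explicit map in Equation~\eqref{eq:virtual_rc} intertwines the combinatorial structures already present on rigged configurations. So I would first, in Section~\ref{sec:fold}, modify the Lusztig construction to produce a \emph{simple} graph $\virtual{\g}$ together with $\phi\colon \virtual{I}\to I$ and positive integers $(\gamma_a)_{a\in I}$, and verify the fundamental scaling identity
\[
\gamma_a A_{ab} \;=\; \sum_{c\in \phi^{-1}(b)} \virtual{A}_{dc}
\qquad\text{for any } d\in\phi^{-1}(a),
\]
which is the combinatorial heart of the virtualization. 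The compatibility with weights \eqref{eq:weight_embedding} forces $\virtual{c}^{(b)} = \gamma_a c^{(a)}$ whenever $b\in \phi^{-1}(a)$, giving $\Psi(\lambda)=\lambda^v$.

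With $(\phi,\gamma)$ in hand, the next step is the key technical lemma: for $(\nu,J)\in\RC(\lambda)$ and $b\in\phi^{-1}(a)$,
\[
\virtual{p}_{\gamma_a i}^{(b)}\bigl(\virtual{\nu}; \lambda^v\bigr) \;=\; \gamma_a\, p_i^{(a)}(\nu;\lambda).
\]
This reduces to a direct computation: substitute the definition of $\virtual{m}$ from \eqref{eq:virtual_rc} into \eqref{eq:vacancy}, note that $\virtual{m}_j^{(c)}=0$ unless $\gamma_{\phi(c)}\mid j$, factor out a $\gamma_a$ via the scaling identity above, and observe $\min(\gamma_a i,\gamma_d j) = \gamma_d \min(\gamma_a i/\gamma_d, j)$ combined with $\virtual{A}_{bc}\in\{0,-1,2\}$ (an entry $2$ occurs only for $b=c$, hence within a single $\phi^{-1}(a)$, so $\gamma_a=\gamma_d$ in that case). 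The scaling of vacancy numbers immediately gives $\virtual{J}_{\gamma_a i}^{(b)}\le\virtual{p}_{\gamma_a i}^{(b)}$ if and only if $J_i^{(a)}\le p_i^{(a)}$, so $v$ preserves $\lambda$-validity, and the three defining conditions of $V$ are immediate from \eqref{eq:virtual_rc}.

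The remaining task is to verify that $v$ commutes with the Kashiwara operators, i.e.\ $v\circ f_a = f_a^v\circ v$ and similarly for $e_a$. Here I use that $\virtual{\g}$ is simply-laced, so Theorem~\ref{thm:RCinf_nsl} applies to $\RC(\virtual{\lambda})$; hence $\virtual{e}_b$ and $\virtual{e}_{b'}$ commute for $b,b'\in \phi^{-1}(a)$, making $f_a^v = \prod_{b\in\phi^{-1}(a)}\virtual{f}_b^{\gamma_a}$ well-defined. Inside $\virtual{\nu}^{(b)}$, every string has length and rigging divisible by $\gamma_a$, so its smallest rigging equals $\gamma_a x$ where $x$ is the smallest rigging of $\nu^{(a)}$. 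A direct induction on the $\gamma_a$ applications of $\virtual{f}_b$ shows that the composite either adds a single string $(\gamma_a,-\gamma_a)$ (when $x>0$) or lengthens $(\gamma_a\ell,\gamma_a x)$ to $(\gamma_a(\ell+1),\gamma_a(x-1))$ (when $x\le 0$); the scaling lemma ensures all other colabels are preserved, which matches exactly the image under $v$ of $f_a(\nu,J)$. Doing this for each $b\in \phi^{-1}(a)$ (the operations commute) produces $v(f_a(\nu,J))$, and $e_a$ is handled symmetrically. The statements about $\wt$, $\varepsilon_a$, $\varphi_a$ in Definition~\ref{def:virtual} follow from $\Psi(\wt(\nu,J))=\virtual{\wt}(v(\nu,J))$ (a consequence of the weight scaling) and from \eqref{Binf_ep}--\eqref{Binf_phi}.

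The main obstacle is Step~(i): producing the folding $(\virtual{\g},\phi,\gamma)$ with $\virtual{\g}$ simply-laced and the scaling identity holding, for an \emph{arbitrary} symmetrizable generalized Cartan matrix. Lusztig's realization guarantees a symmetric ambient type but in general introduces multiple edges, which would force $\virtual{\g}$ to be non-simply-laced and make Theorem~\ref{thm:RCinf_nsl} inapplicable. Resolving this by subdividing edges (and adjusting the scaling factors accordingly) is where the real work lies; once it is done, the rigged-configuration verification above is essentially mechanical.
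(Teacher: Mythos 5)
Your outline of the virtualization check (scaling of vacancy numbers, preservation of $\lambda$-validity, commutation with the Kashiwara operators) is essentially the paper's own argument in Section~\ref{sec:generalRC}, which likewise defers to \cite[Prop.~3.7]{SS15} and \cite[Prop.~4.2]{SS2015}. But you have explicitly left unproven the one step that constitutes the actual content of the paper's proof: the existence, for an \emph{arbitrary} symmetrizable generalized Cartan matrix $A$, of a simple graph $\virtual{\g}$ and a folding $\phi$ realizing $A$. You flag this as ``where the real work lies'' and stop there, so the proposal does not prove the conjecture. Moreover, the fix you gesture at --- subdividing the multiple edges of Lusztig's graph --- does not work: inserting a new vertex in the middle of an edge creates a vertex lying in no fiber $\phi^{-1}(a)$, changes the quotient Cartan matrix, and destroys the folding. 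The paper's Section~\ref{sec:fold} instead \emph{enlarges the fibers}: it takes $N d_a$ copies $v_{a,s}$, $s \in \ZZ/(Nd_a)\ZZ$, of each node $a$ (rather than Lusztig's $d_a$ copies), with $N = \max\{-A_{ab}d_a/\lcm(d_a,d_b)\}$, and wires $v_{a,s}$ to the $c_{ab}$ consecutive vertices $v_{b,s},\dots,v_{b,s+c_{ab}-1}$; Lemma~\ref{lemma:c_bound} ($c_{ab}\le N$) is exactly what guarantees no parallel edges arise, and Proposition~\ref{prop:graph_properties} then recovers $-A_{ab}$ as the number of neighbors of a fixed vertex of the $a$-orbit inside the $b$-orbit.

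A secondary point: the paper's construction achieves $\gamma_a = 1$ for all $a$, so all of your bookkeeping with general scaling factors (divisibility of string lengths and riggings by $\gamma_a$, the induction over $\gamma_a$ applications of $\virtual{f}_b$) is unnecessary here. In fact your own scaling identity, applied with $b=a$, reads $2\gamma_a = \sum_{c\in\phi^{-1}(a)}\virtual{A}_{dc} \le 2$ once the fibers contain no edges, so it already forces $\gamma_a=1$; carrying general $\gamma_a$ through the argument obscures rather than helps. The upshot is that your proposal correctly identifies and partially executes the routine half of the proof, but the existence of the simply-laced folding --- the new theorem --- is missing, and the repair strategy you name would fail.
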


\section{Symmetrizable types as foldings from simply-laced types}
\label{sec:fold}

In this section, we give a modified graph construction from~\cite[Prop.~14.1.2]{Lusztig93} which ensures that the resulting graph is simple.  We identify simple graphs with simply-laced Dynkin diagrams.

Let $D = (d_a)_{a \in I}$ be a diagonal matrix such that $DA$ is symmetric with $d_a \in \ZZ_{>0}$ and $\gcd(d_a \mid a \in I) = 1$. Define $d_{a,b} := \lcm(d_a, d_b)$ and $N = \max \left\{ \frac{-A_{ab}d_a}{d_{a,b}} \mid a \neq b \in I \right\}$.  Assert that $\Gamma_A$ is a graph with vertex set 
\[
\{v_{a,s} : a \in I \text{ and } s \in \ZZ / (N d_a) \ZZ \}
\]
and edge set constructed as follows.   Fix some $a \neq b \in I$, let $\widetilde{d}_a, \widetilde{d}_b$ be such that $\gcd(\widetilde{d}_a, \widetilde{d}_b) = 1$ and $\frac{\widetilde{d}_a}{\widetilde{d}_b} = \frac{d_a}{d_b}$, and let $c_{ab}$ be such that $c_{ab} \widetilde{d}_a = -A_{ba}$.  Then $\Gamma_A$ has edges
\[
\bigl\{ \{v_{a,s}, v_{b,s+k}\} \mid a,b \in I, \ k = 0, 1, \dotsc, c_{ab}-1, \ s = 0, 1, \dotsc, N {d}_a {d}_b-1 \bigr\},
\]
where the indices $s$ and $s+k$ are taken modulo $N d_a$ and $N d_b$, respectively. Define a map $\phi_A \colon \Gamma_A \longrightarrow \Gamma_A$ by $\phi_A(v_{a,s}) = v_{a,s+1}$ for $a \in I$ and $s+1$ understood modulo $Nd_a$.  

\begin{ex}
Let $A = (\begin{smallmatrix}2&-6\\-4&2\end{smallmatrix})$.  Then $D = (\begin{smallmatrix}2&0\\0&3\end{smallmatrix})$ is a diagonal matrix such that $DA = (\begin{smallmatrix}4&-12\\-12&6\end{smallmatrix})$ is symmetric.  Since $d_1 = 2$ and $d_2 = 3$ are relatively prime, set $\widetilde d_1 = 2$, $\widetilde d_2 = 3$.  Then $N = 2$ and $c_{12} = c_{21} = 2$.  Hence $\Gamma_A$ has vertices 
\[
\{v_{1,0}, v_{1,1}, v_{1,2}, v_{1,3}, v_{2,0}, v_{2,1}, v_{2,2}, v_{2,3}, v_{2,4}, v_{2,5}\}
\] 
and
\[
\Gamma_A = \ \ \ \
\begin{tikzpicture}[baseline=30,xscale=2,yscale=.75]
\node[circle,fill,scale=.35,label={west:$v_{1,0}$}] (10) at (0,0) {};
\node[circle,fill,scale=.35,label={west:$v_{1,1}$}] (11) at (0,1) {};
\node[circle,fill,scale=.35,label={west:$v_{1,2}$}] (12) at (0,2) {};
\node[circle,fill,scale=.35,label={west:$v_{1,3}$}] (13) at (0,3) {};
\node[circle,fill,scale=.35,label={east:$v_{2,0}$}] (20) at (2,-1) {};
\node[circle,fill,scale=.35,label={east:$v_{2,1}$}] (21) at (2,0) {};
\node[circle,fill,scale=.35,label={east:$v_{2,2}$}] (22) at (2,1) {};
\node[circle,fill,scale=.35,label={east:$v_{2,3}$}] (23) at (2,2) {};
\node[circle,fill,scale=.35,label={east:$v_{2,4}$}] (24) at (2,3) {};
\node[circle,fill,scale=.35,label={east:$v_{2,5}$}] (25) at (2,4) {};
\path[-, color=darkred]
 (10) edge (20)
 (11) edge (21)
 (12) edge (22)
 (13) edge (23)
 (10) edge (24)
 (11) edge (25)
 (12) edge (20)
 (13) edge (21)
 (10) edge (22)
 (11) edge (23)
 (12) edge (24)
 (13) edge (25);
\path[-, color=blue]
 (10) edge (21)
 (11) edge (22)
 (12) edge (23)
 (13) edge (24)
 (10) edge (25)
 (11) edge (20)
 (12) edge (21)
 (13) edge (22)
 (10) edge (23)
 (11) edge (24)
 (12) edge (25)
 (13) edge (20);
\end{tikzpicture}
\]
\end{ex}

\begin{prop}
\label{prop:graph_properties}
Let $A$ be any symmetrizable Cartan matrix and $a \neq b\in I$.  The map $\phi_A$ defined above is a Dynkin diagram automorphism of $\Gamma_A$.  Moreover, let $E_{ab}$ denote the number of edges between any fixed vertex in the $\phi_A$-orbit of $a$ with some vertex in the $\phi_A$-orbit of $b$. Then
$
-A_{ab} = E_{ab}.
$
\end{prop}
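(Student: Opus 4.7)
The proof naturally splits into two parts: $\phi_A$ is a graph automorphism, and $E_{ab} = -A_{ab}$. For the first claim, $\phi_A$ is clearly a bijection on each orbit $\{v_{a,s} \mid s \in \ZZ/(Nd_a)\ZZ\}$ via the cyclic shift $s \mapsto s+1$. To show it preserves edges, observe that an edge $\{v_{a,\widetilde{s}},v_{b,\widetilde{t}}\}$ of $\Gamma_A$ comes from a pair $(k,s)$ with $k\in\{0,\ldots,c_{ab}-1\}$, $s\in\{0,\ldots,Nd_ad_b-1\}$, $\widetilde{s}\equiv s\pmod{Nd_a}$, and $\widetilde{t}\equiv s+k\pmod{Nd_b}$. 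Because $Nd_ad_b$ is a common multiple of both $Nd_a$ and $Nd_b$, replacing $s$ by $s+1\pmod{Nd_ad_b}$ in the same formulas produces the edge $\{v_{a,\widetilde{s}+1},v_{b,\widetilde{t}+1}\} = \{\phi_A(v_{a,\widetilde{s}}),\phi_A(v_{b,\widetilde{t}})\}$, so $\phi_A$ is a graph automorphism.

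For the count, fix $v_{a,s_0}$ and determine its neighbors in the $b$-orbit that arise from the $(a,b)$-parameterization. Writing $s = s_0 + mNd_a$ for $m \in \{0,\ldots,d_b-1\}$, the candidates are $v_{b,t}$ with $t \equiv s_0 + k + mNd_a \pmod{Nd_b}$ for $k \in \{0,\ldots,c_{ab}-1\}$. For fixed $k$, as $m$ varies, the residues $mNd_a \bmod Nd_b$ trace out the cyclic subgroup $H := \langle Nd_a\rangle \subseteq \ZZ/(Nd_b)\ZZ$, which is generated by $\gcd(Nd_a,Nd_b) = N\gcd(d_a,d_b)$ and has order $\widetilde{d}_b = d_b/\gcd(d_a,d_b)$. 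So each fixed $k$ contributes $\widetilde{d}_b$ distinct values of $t$. The cosets $(s_0+k)+H$ for $k\in\{0,\ldots,c_{ab}-1\}$ are pairwise disjoint because $c_{ab} \leq N \leq N\gcd(d_a,d_b)$ (the first inequality built into the definition of $N$, the second since $\gcd\ge 1$). Hence $E_{ab} = c_{ab}\widetilde{d}_b$, and combining the defining relation $c_{ab}\widetilde{d}_a = -A_{ba}$ with the symmetrizability identity $\widetilde{d}_a A_{ab} = \widetilde{d}_b A_{ba}$ (derived from $d_a A_{ab} = d_b A_{ba}$ together with $\widetilde{d}_a/\widetilde{d}_b = d_a/d_b$) gives $c_{ab}\widetilde{d}_b = -A_{ab}$, as desired.

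The main obstacle is the modular-arithmetic bookkeeping in the counting step, particularly verifying the coset-disjointness inequality $c_{ab} \leq N\gcd(d_a,d_b)$, which is what converts the naive count of parameter tuples into the correct count of distinct neighbors. A secondary concern is that reading off neighbors from the symmetric $(b,a)$-parameterization should not contribute additional edges; using the identity $c_{ab} = c_{ba}$ (another consequence of $d_a A_{ab} = d_b A_{ba}$), a parallel CRT analysis shows both parameterizations trace out the same family of cosets modulo $N\gcd(d_a,d_b)$, so counting from one side alone is sufficient.
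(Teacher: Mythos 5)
Your proof is correct in its core and takes essentially the same route as the paper: both arguments reduce to showing $E_{ab} = c_{ab}\widetilde{d}_b$ and then using $c_{ab}\widetilde{d}_a = -A_{ba}$ together with $d_aA_{ab}=d_bA_{ba}$; the only organizational difference is that you count distinct neighbors directly as a disjoint union of cosets of $H = \langle Nd_a\rangle \subseteq \ZZ/(Nd_b)\ZZ$, whereas the paper counts parameter incidences and divides out the multiplicity, with the bound $c_{ab}\le N$ (the paper's Lemma~\ref{lemma:c_bound}) playing the same disjointness role in both versions. Your coset bookkeeping is, if anything, cleaner than the paper's, which conflates $\gcd(d_a,d_b)$ with $d_{a,b}=\lcm(d_a,d_b)$ in the multiplicity count.

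One caveat about your final paragraph: the claim that the $(a,b)$- and $(b,a)$-parameterizations ``trace out the same family of cosets'' is not true in general. The first gives the cosets $(s_0+k)+H$ for $k\in\{0,\dots,c_{ab}-1\}$ and the second gives $(s_0-k)+H$, and the sets $\{0,\dots,c_{ab}-1\}$ and $\{0,-1,\dots,-(c_{ab}-1)\}$ modulo $N\gcd(d_a,d_b)$ coincide only when $c_{ab}=1$ or $c_{ab}=N\gcd(d_a,d_b)$; in rank at least three one can have $\gcd(d_a,d_b)>1$ with $1<c_{ab}<N\gcd(d_a,d_b)$ (e.g.\ $d=(2,2,1)$ with $A_{12}=A_{21}=-2$), where the two families genuinely differ. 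This does not damage your main count, because the intended reading (the one the paper's own proof uses) is that each unordered pair of orbits receives edges from a single fixed parameterization; but as written your justification for dismissing the reverse parameterization does not hold up, and if the edge set were really taken over ordered pairs the proposition itself would fail in such examples.
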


In order to prove this proposition, we require a result from \cite{Cetal:10}, which we restate here for the reader's convenience.

\begin{prop}[{\cite[Prop.~2.5]{Cetal:10}}]
\label{prop:Cetal10}
Let $A$ be a symmetrizable Cartan matrix and let $\Gamma_A$ be its corresponding Dynkin diagram.  Let $D = (d_a)_{a\in I}$ be the (diagonal) symmetrizing matrix of $A$.  Then
\[
\frac{d_a}{d_b} = \frac{A_{ba}}{A_{ab}},
\]
whenever $a$ and $b$ are connected by an edge in $\Gamma_A$.
\end{prop}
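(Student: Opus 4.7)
The plan is to establish the two assertions in turn, with the bulk of the work devoted to the edge count; the main obstacle is an arithmetic inequality, forced by the definition of $N$, that ensures distinctness of endpoints. To begin, $\phi_A$ is clearly a bijection on vertices, since it restricts to the cyclic shift $s \mapsto s+1$ on each orbit $\{v_{a,s} : s \in \ZZ/(Nd_a)\ZZ\}$. To see it sends edges to edges, take an arbitrary edge $\{v_{a,s}, v_{b,s+k}\}$ with $s \in \{0, 1, \dotsc, Nd_ad_b - 1\}$ and $k \in \{0, 1, \dotsc, c_{ab}-1\}$; applying $\phi_A$ coordinate-wise yields $\{v_{a,s+1}, v_{b,s+1+k}\}$, which is again of the same form with parameter $s' = s+1$ (reduced modulo $Nd_ad_b$ if necessary). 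Hence $\phi_A$ is a Dynkin diagram automorphism, and in particular $E_{ab}$ is independent of the chosen representative in the orbit of $a$.

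For the edge count, fix the reference vertex $v_{a,0}$ and count its neighbors in the orbit of $b$. Using Proposition~\ref{prop:Cetal10} one has $d_a A_{ab} = d_b A_{ba}$, from which an easy manipulation with $\widetilde d_a = d_a/\gcd(d_a,d_b)$ and $\widetilde d_b = d_b/\gcd(d_a,d_b)$ yields the symmetric formula $c_{ab} = -A_{ba}/\widetilde d_a = -A_{ab}/\widetilde d_b$; in particular $c_{ab}$ is a positive integer, and it satisfies $c_{ab} \le N$ by the maximality in the definition of $N$. Edges from $v_{a,0}$ arise from pairs $(s,k)$ with $s \equiv 0 \pmod{Nd_a}$, i.e., $s = jNd_a$ for $j \in \{0, 1, \dotsc, d_b - 1\}$, yielding endpoints $v_{b,t}$ with $t \equiv jNd_a + k \pmod{Nd_b}$.

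As $j$ varies, $jNd_a \bmod Nd_b$ traces out the cyclic subgroup of $\ZZ/(Nd_b)\ZZ$ of order $\widetilde d_b$, whose consecutive elements are separated by $N\gcd(d_a, d_b)$. The heart of the argument is to show that distinct pairs $(j \bmod \widetilde d_b,\, k)$ produce distinct residues $t$; this reduces to the inequality $c_{ab} \le N\gcd(d_a, d_b)$, which is exactly what the choice $N = \max\{c_{ab}\}$ was engineered to guarantee, since $c_{ab} \le N \le N\gcd(d_a, d_b)$. Once this is in hand, the shifts by $k \in \{0, \dotsc, c_{ab}-1\}$ remain strictly within the ``fundamental gap'' between consecutive subgroup elements, so the number of distinct neighbors of $v_{a,0}$ in the orbit of $b$ equals $\widetilde d_b \cdot c_{ab} = -A_{ab}$. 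This simultaneously establishes $E_{ab} = -A_{ab}$ and the simplicity of $\Gamma_A$; the remaining verifications are routine bookkeeping in the cyclic groups $\ZZ/(Nd_a)\ZZ$.
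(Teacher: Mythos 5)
Your proposal does not prove the statement at hand. The statement is Proposition~\ref{prop:Cetal10}: for a symmetrizable Cartan matrix $A$ with symmetrizer $D = (d_a)_{a\in I}$, one has $d_a/d_b = A_{ba}/A_{ab}$ whenever $a$ and $b$ are joined by an edge. What you have written instead is a proof sketch of Proposition~\ref{prop:graph_properties} (that $\phi_A$ is a diagram automorphism of $\Gamma_A$ and that $E_{ab} = -A_{ab}$), which is a different result. Worse, your argument explicitly \emph{invokes} Proposition~\ref{prop:Cetal10} as an ingredient (``Using Proposition~\ref{prop:Cetal10} one has $d_a A_{ab} = d_b A_{ba}$\dots''), so if read as a proof of that proposition it is circular.

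For the record, the intended statement needs almost no argument: since $DA$ is symmetric and $D$ is diagonal, $(DA)_{ab} = d_a A_{ab}$ and $(DA)_{ba} = d_b A_{ba}$, whence $d_a A_{ab} = d_b A_{ba}$; the hypothesis that $a$ and $b$ are connected by an edge guarantees $A_{ab}, A_{ba} \neq 0$, so one may divide to obtain $d_a/d_b = A_{ba}/A_{ab}$. The paper itself supplies no proof here --- it restates the result from \cite[Prop.~2.5]{Cetal:10} for the reader's convenience --- so there is no in-paper argument to compare against, but the above one-line verification is what a self-contained proof should look like. I would encourage you to reread the statement you are asked to prove before drafting: the material you wrote may well be salvageable as an attempt at Proposition~\ref{prop:graph_properties}, but it does not address this one.
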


We also need the following technical lemma.

\begin{lemma}
\label{lemma:c_bound}
With the notation as above, we have $c_{ab} \leq N$.
\end{lemma}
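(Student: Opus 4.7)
The plan is to observe that $c_{ab}$ is, up to a swap of indices, precisely one of the expressions appearing in the maximum defining $N$. So the lemma should reduce to an unwinding of definitions.

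First I would rewrite $\widetilde{d}_a$ and $\widetilde{d}_b$ explicitly. Setting $g = \gcd(d_a, d_b)$, the conditions $\widetilde{d}_a / \widetilde{d}_b = d_a / d_b$ and $\gcd(\widetilde{d}_a, \widetilde{d}_b) = 1$ force $\widetilde{d}_a = d_a / g$ and $\widetilde{d}_b = d_b / g$. Since $d_{a,b} = \lcm(d_a, d_b) = d_a d_b / g$, this gives the identity
\[
\widetilde{d}_a \;=\; \frac{d_a}{g} \;=\; \frac{d_{a,b}}{d_b}.
\]

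Next I would substitute this into the defining relation $c_{ab}\, \widetilde{d}_a = -A_{ba}$ to obtain
\[
c_{ab} \;=\; \frac{-A_{ba}}{\widetilde{d}_a} \;=\; \frac{-A_{ba}\, d_b}{d_{a,b}}.
\]
At this point I would note that $d_{a,b} = d_{b,a}$, so the right-hand side is exactly the expression $-A_{ba} d_b / d_{b,a}$, which is one of the terms appearing in the maximum
\[
N \;=\; \max\!\left\{ \frac{-A_{cd}\, d_c}{d_{c,d}} \,:\, c \neq d \in I \right\}
\]
(take $c = b$, $d = a$). Hence $c_{ab} \le N$.

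I do not foresee a real obstacle here; the only subtlety worth flagging is making sure $c_{ab}$ is well-defined as a positive integer when $a$ and $b$ are joined in the Dynkin diagram. This follows from Proposition~\ref{prop:Cetal10}: the symmetry $d_a A_{ab} = d_b A_{ba}$ rewrites as $\widetilde{d}_b A_{ba} = \widetilde{d}_a A_{ab}$, and since $\gcd(\widetilde{d}_a, \widetilde{d}_b) = 1$ we conclude $\widetilde{d}_a \mid A_{ba}$. When $a$ and $b$ are not connected, $A_{ab} = A_{ba} = 0$, so $c_{ab} = 0 \le N$ trivially. Either way the inequality is immediate.
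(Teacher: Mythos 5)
Your proof is correct and follows essentially the same route as the paper's: unwind the definitions to recognize $c_{ab}$ as one of the terms in the maximum defining $N$. The only (cosmetic) difference is that you match $c_{ab} = -A_{ba}d_b/d_{b,a}$ to the $(b,a)$-term directly using $d_{a,b}=d_{b,a}$, whereas the paper first converts $-A_{ba}/\widetilde{d}_a$ into $-A_{ab}/\widetilde{d}_b$ via Proposition~\ref{prop:Cetal10} and matches it to the $(a,b)$-term; your version needs Proposition~\ref{prop:Cetal10} only for the integrality remark at the end.
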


\begin{proof}
We have
\[
c_{ab} = \frac{-A_{ba}}{\widetilde{d}_a} = \frac{-A_{ab}}{\widetilde{d}_a} \cdot \frac{d_a}{d_b} = \frac{-A_{ab}}{\widetilde{d}_a} \cdot \frac{\widetilde{d}_a}{\widetilde{d}_b} = \frac{-A_{ab}}{\widetilde{d}_b}
\]
from Proposition~\ref{prop:Cetal10}. We also have
\begin{equation}
\label{eq:redefining_N}
\frac{-A_{ab} d_a}{\lcm(d_a, d_b)} = \frac{-A_{ab} \gcd(d_a, d_b)}{d_b} = \frac{-A_{ab}}{\widetilde{d}_b}
\end{equation}
from the definition of $\widetilde{d}_b = \frac{d_b}{\gcd(d_a,d_b)}$. Since $N$ is defined as the maximum over the values given by Equation~\eqref{eq:redefining_N}, the claim follows.
\end{proof}

\begin{proof}[Proof of Proposition~\ref{prop:graph_properties}]
The fact that $\phi_A$ is an automorphism is clear from the construction of $\Gamma_A$ and $\phi_A$. 
For $a\neq b$ in $I$, we have
\[
E_{ab} 
= \#\bigl\{ \{v_{a,s}, v_{b,s+k} \} : k = 0 ,1,\dots, c_{ab}-1, \ s = 0,1,\dots, N d_a d_b - 1 \bmod{N d_a} \bigr\}.
\]
Next, we consider the number of times the edge $\{v_{a,0}, v_{b,0} \}$ occurs in the set above. Note that for $k = 0$, we have $d_{a,b}$ values of $s$ such that $s \equiv 0 \bmod{N d_a}$ and $s \equiv 0 \bmod{N d_b}$. Lemma~\ref{lemma:c_bound} states that $k < N$, and hence, there does not exist a value $k>0$ such that $s \equiv 0 \bmod{N d_a}$ such that $s + k \equiv 0 \bmod{N d_b}$. From the construction, we can take any fixed edge and obtain the same result.
Hence,
\[
E_{ab} 
=  c_{ab} \cdot \frac{N {d}_a {d}_b}{N {d}_a}\cdot \frac{1}{d_{a,b}}
= c_{ab} \cdot {d}_b \cdot \frac{\widetilde{d}_b}{d_b}
= -\frac{A_{ba}}{\widetilde{d}_a} \cdot {d}_b \cdot \frac{\widetilde{d}_b}{d_b}
= -\frac{A_{ab} \widetilde{d}_a}{\widetilde{d}_{a}\widetilde{d}_b} \cdot {d}_b \cdot \frac{\widetilde{d}_b}{d_b}
= -A_{ab},
\]
where we used Proposition \ref{prop:Cetal10} and the fact that $\frac{d_a}{d_b} = \frac{\widetilde{d}_a}{\widetilde{d}_b}$.
\end{proof}

In other words, Proposition~\ref{prop:graph_properties} states that we can recover $A$ from $(\Gamma_A, \phi_A)$. We also have that the induced map on the weight lattice from Equation~\eqref{eq:weight_embedding} implies
\begin{equation}
\label{eq:root_embedding}
\alpha_a \mapsto \sum_{b \in \phi^{-1}_A(a)} \virtual{\alpha}_b.
\end{equation}

\section{Proof of Conjecture \ref{conj:RC_virtualization}}
\label{sec:generalRC}

In this section we prove our main result.  That is, we show that $\RC(\lambda)$ virtualizes in $\RC\bigl(\Psi(\lambda)\bigr)$ for any $\lambda \in P^+ \cup \{\infty\}$, where $\Psi \colon P \longrightarrow \virtual{P}$ is the induced map on the weight lattices corresponding to $\Gamma_A$ with $\gamma_a = 1$ for all $a \in I$.  Indeed, by the definition of the crystal operators, we can restrict the proof to the rank two case.  The fact that the crystal operators commute with the virtualization map can be made using an argument similar to~\cite[Prop.~3.7]{SS15} using Proposition~\ref{prop:graph_properties} and the construction of $\Gamma_A$.

We sketch the argument here. Note that $\virtual{m}_i^{(b)} = \virtual{m}_i^{(b')}$ and $\virtual{J}^{(b)}_i = \virtual{J}_i^{(b')}$ for all $b,b' \in \phi^{-1}(a)$, $a \in I$, and $i \in \ZZ_{>0}$. Hence $e_a^v$ and $f_a^v$ change $\nu^{(b)}$ for all $b \in \phi^{-1}(a)$ in exactly the same position. Moreover, each $\virtual{\nu}^{(b')}$ for $b' \in \phi^{-1}(a')$ has exactly $-A_{aa'}$ values of $b \in \phi^{-1}(a)$ such that $\virtual{A}_{bb'} = -1$ (i.e., $b$ and $b'$ are adjacent in $\Gamma_A$), so when there is a change in vacancy numbers, and hence a change in the riggings, it is exactly $A_{aa'}$ for all $a' \in I$. So $f_a^v(\virtual{\nu}, \virtual{J}) = v\bigl(f_a(\nu, J)\bigr)$.

Thus the result follows from~\cite[Prop.~4.2]{SS2015} (which relies on~\cite[Lemma~3.6]{S06}) and Equation~\eqref{eq:virtual_rc}.
Furthermore, the statements of Theorem~5.20 and Corollary~6.2 in~\cite{SS2015} hold. Hence, we have shown that the rigged configuration model is valid in all symmetrizable types. Alternatively this follows from~\cite[Thm.~5.1]{K96} by Equation~\eqref{eq:root_embedding}.

\begin{thm}
\label{thm:main_result}
Let $\g$ be a Kac-Moody algebra of arbitrary symmetrizable type. Then $\RC(\lambda) \iso B(\lambda)$ for $\lambda \in P^+ \cup \{\infty\}$.
\end{thm}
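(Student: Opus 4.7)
The plan is to establish the theorem by realizing $\RC(\lambda)$ as a virtual crystal inside $\RC(\lambda^v)$, where $\virtual{\g}$ is the simply-laced Kac-Moody algebra associated to the simple graph $\Gamma_A$, $\phi_A \colon \virtual{I} \to I$ is the folding from Section~\ref{sec:fold}, and $\gamma_a = 1$ for all $a \in I$. Once the virtualization is in place, the theorem reduces to the already-established simply-laced case (Theorem~\ref{thm:RCinf_nsl}) together with general virtual crystal theory, in particular~\cite[Thm.~5.1]{K96}, which guarantees that $B(\lambda)$ of type $\g$ embeds as a $U_q(\g)$-subcrystal of $B(\lambda^v)$ of type $\virtual{\g}$.

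First I would verify that the map $v$ defined by Equation~\eqref{eq:virtual_rc} genuinely sends $\RC(\lambda)$ into the virtual subcrystal $V \subseteq \RC(\lambda^v)$. With $\gamma_a = 1$, the three defining conditions for $V$ collapse to the requirement that $\virtual{m}_i^{(b)} = \virtual{m}_i^{(b')}$ and $\virtual{J}_i^{(b)} = \virtual{J}_i^{(b')}$ for all $b, b' \in \phi_A^{-1}(a)$, which is built into~\eqref{eq:virtual_rc}. I would also check that $\lambda$-validity is preserved: since $\Psi(\Lambda_a) = \sum_{b \in \phi_A^{-1}(a)} \virtual{\Lambda}_b$ and all parts in a given $\phi_A^{-1}$-orbit are identical, the virtual vacancy numbers $\virtual{p}_i^{(b)}$ can be related directly to $p_i^{(a)}$.

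The crux of the argument is showing that $v$ intertwines the Kashiwara operators, i.e.\ $v \circ f_a = f_a^v \circ v$ and similarly for $e_a$. Because each $f_a, e_a$ only inspects $(\nu,J)^{(a)}$ and the vacancy numbers $p_i^{(a)}$, which themselves depend only on $(\nu,J)^{(a')}$ for $a'$ adjacent to $a$, this reduces to the rank-two case. The central input is Proposition~\ref{prop:graph_properties}: the number of edges $E_{ab'}$ in $\Gamma_A$ between any fixed vertex in $\phi_A^{-1}(a)$ and vertices in $\phi_A^{-1}(a')$ equals $-A_{ab'}$. Summing the contributions of the identical partitions $\virtual{\nu}^{(b')}$ for $b' \in \phi_A^{-1}(a')$ then produces exactly the type-$\g$ change in vacancy number $p_i^{(a)}$ when $f_a^v = \prod_b \virtual{f}_b$ is applied. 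The expected obstacle is bookkeeping: one must verify that the minimal-length string selected by each $\virtual{f}_b$ and the minimal-length string selected by $f_a$ correspond under $v$, and that after each of the $|\phi_A^{-1}(a)|$ simply-laced moves the intermediate configurations retain the $\phi_A$-invariance required to remain in $V$. This follows because each $\virtual{f}_b$ for $b \in \phi_A^{-1}(a)$ acts on a distinct, disconnected partition of the virtual rigged configuration, so the moves commute and each sees the same local data.

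Combining the previous two steps, $v$ is a $U_q(\g)$-crystal embedding with image $V$. By Theorem~\ref{thm:RCinf_nsl} applied to $\virtual{\g}$, we have $\RC(\virtual{\infty}) \iso B(\virtual{\infty})$, and the $\lambda$-valid cut gives $\RC(\virtual{\lambda}) \iso B(\virtual{\lambda})$ via the natural projection. Since $v$ sends the generator $(\nu_\emptyset, J_\emptyset)$ to the generator of $V$, and $V \subseteq \RC(\virtual{\lambda})$ matches the virtual image of $B(\lambda)$ inside $B(\virtual{\lambda})$ supplied by~\cite[Thm.~5.1]{K96}, the crystal generated from $(\nu_\emptyset, J_\emptyset)$ by the type-$\g$ operators is isomorphic to $B(\lambda)$, yielding $\RC(\lambda) \iso B(\lambda)$. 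The case $\lambda \in P^+$ then follows from the case $\lambda = \infty$ by the $\lambda$-validity cutoff, exactly as the natural projection $B(\infty) \twoheadrightarrow B(\lambda)$ is modelled on rigged configurations in~\cite{SS2015}.
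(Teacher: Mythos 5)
Your proposal is correct and follows essentially the same route as the paper: fold via the simple graph $\Gamma_A$ of Section~\ref{sec:fold} with all scaling factors $\gamma_a = 1$, reduce the intertwining of crystal operators to the rank-two case using Proposition~\ref{prop:graph_properties} to match the vacancy-number changes, and then conclude from the simply-laced result together with~\cite[Thm.~5.1]{K96} (the paper explicitly offers this as an alternative to citing~\cite[Prop.~4.2]{SS2015}). Your additional bookkeeping about why the $\virtual{f}_b$ commute and preserve $\phi_A$-invariance is exactly the content the paper compresses into its sketch.
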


\section{Littlewood-Richardson rule}
\label{sec:LR}

In this section, we give a Littlewood-Richardson rule using rigged configurations, which requires a combinatorial description of $\varepsilon_a$ and $\varphi_a$.  The proof of the following Proposition follows~\cite{Sakamoto14,SS2015}.

\begin{prop}
\label{prop:ep_phi}
Let $x$ be the smallest rigging in $(\nu, J)^{(a)}$, where $(\nu,J) \in \RC(\lambda)$ or $\RC(\infty)$. Then, for all $a\in I$, 
\[
\varepsilon_a(\nu,J) = -\min\{0, x\},
\ \ \ \ \ \ 
\varphi_a(\nu,J) = p_{\infty}^{(a)} - \min\{0, x\}.
\]
\end{prop}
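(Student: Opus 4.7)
The plan is to first reduce the $\varphi_a$ identity to the $\varepsilon_a$ identity. Combining Equation~\eqref{Binf_phi}, which reads $\varphi_a(\nu,J) = \varepsilon_a(\nu,J) + \langle h_a, \wt(\nu,J)\rangle$, with the observation made right after the definition of $\RC(\infty)$ that $\langle h_a, \wt(\nu,J)\rangle = p_{\infty}^{(a)}$ (whose obvious analogue holds for $\RC(\lambda)$ after replacing $\wt$ by $\wt' = \wt + \lambda$), one obtains $\varphi_a(\nu,J) = p_{\infty}^{(a)} - \min\{0, x\}$ as soon as $\varepsilon_a(\nu,J) = -\min\{0, x\}$ has been established. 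The whole proof therefore reduces to this single identity for $\varepsilon_a$.

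For the $\varepsilon_a$ identity, the case $x \geq 0$ is immediate: by the defining rule of $e_a$ we have $e_a(\nu,J) = 0$, so $\varepsilon_a(\nu,J) = 0 = -\min\{0, x\}$. For $x < 0$, I would argue by induction on $-x$, after first proving the key step that one application of $e_a$ produces a rigged configuration whose smallest rigging in partition $a$ is exactly $x+1$. Iterating then drives the minimum up to $0$ after $-x$ steps and yields $\varepsilon_a = -x$. To prove the key step, pick a string $(\ell, x)$ of minimal length $\ell$ with rigging $x$, and compute the vacancy-number shift from Equation~\eqref{eq:vacancy} as $\Delta p_i^{(a)} = 2\bigl[\min(i,\ell) - \min(i,\ell-1)\bigr]$, which equals $2$ for $i \geq \ell$ and $0$ otherwise. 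Separate the strings of $(\nu,J)^{(a)}$ into three classes: those of length $< \ell$ are untouched and had riggings strictly greater than $x$, hence $\geq x+1$; the modified string becomes $(\ell-1, x+1)$ when $\ell \geq 2$; and the remaining strings of length $\geq \ell$ keep their colabels, so their riggings bump up by $2$ to values $\geq x+2$. When $\ell \geq 2$ these three classes together show that the new minimum rigging is exactly $x+1$.

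The main obstacle is the boundary case $\ell = 1$: there the modified string disappears and the remaining strings in $(\nu,J)^{(a)}$ only guarantee a new minimum $\geq x+2$, which is inconsistent with $\varepsilon_a = -x$ whenever $x \leq -2$. I would remove it using the structural lemma that in $\RC(\infty)$ (and hence in $\RC(\lambda)$) any length-$1$ string realizing the minimum rigging of its partition must have rigging exactly $-1$. This can be proved directly by induction along the graph $\RC(\infty)$ from $(\nu_\emptyset, J_\emptyset)$, carefully tracking how $e_b$ and $f_b$ modify riggings of length-$1$ strings through vacancy-number shifts, as in~\cite{Sakamoto14, SS2015}; alternatively, and more cleanly, it is immediate from Theorem~\ref{thm:main_result}, because all scaling factors $\gamma_a$ equal $1$, so the virtualization map carries the riggings of $(\nu,J)^{(a)}$ verbatim into each $(\virtual{\nu}, \virtual{J})^{(b)}$ for $b \in \phi^{-1}(a)$ and $\varepsilon_a = \virtual{\varepsilon}_b$, reducing the claim for arbitrary symmetrizable $\g$ to the simply-laced case already settled in~\cite{SS2015}.
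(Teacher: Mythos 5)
Your proposal is essentially correct, but it is worth saying plainly that the paper itself gives no written argument: it simply states that the proof ``follows \cite{Sakamoto14,SS2015}'', i.e., the intended proof is the transfer of the known simply-laced formula through the virtualization established in Section~\ref{sec:generalRC} (with all $\gamma_a=1$, the riggings, $\varepsilon_a=\virtual{\varepsilon}_b$, $\varphi_a = \virtual\varphi_b$, and $p_\infty^{(a)}=\virtual{p}_\infty^{(b)}$ all match on the nose). Your reduction of the $\varphi_a$ identity to the $\varepsilon_a$ identity, the case $x\ge 0$, and the computation $\Delta p_i^{(a)} = 2\bigl[\min(i,\ell)-\min(i,\ell-1)\bigr]$ with the resulting three-way classification of strings are all correct, and you have correctly isolated the one genuinely hard point: the case $\ell=1$, which forces the lemma that a length-one string realizing a negative minimum rigging must have rigging $-1$.

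The caveat is that of your two proposed ways to discharge that lemma, route (a) is not a proof as written. The naive invariant (length-one strings achieving the minimum never have rigging below $-1$) is not obviously stable along the crystal graph in non-simply-laced types: applying $e_b$ for a neighbor $b$ of $a$ that removes a length-one string from $\nu^{(b)}$ lowers $p_1^{(a)}$ by $-A_{ab}$, which can be $\ge 2$, so riggings of length-one strings in $\nu^{(a)}$ can drop by more than one in a single step; one would need a more refined inductive hypothesis, and ``carefully tracking'' does not supply it. Route (b), by contrast, is valid and non-circular (Theorem~\ref{thm:main_result} does not rely on Proposition~\ref{prop:ep_phi}), and it coincides with the paper's own proof --- but once you invoke it, the same transfer yields the entire Proposition immediately, since the smallest rigging of $(\nu,J)^{(a)}$ equals that of $(\virtual\nu,\virtual J)^{(b)}$ and the simply-laced statement is exactly what \cite{SS2015} proves. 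So your direct computation is a nice, more self-contained supplement for $\ell\ge 2$, but the load-bearing step of your argument ends up being the same citation-plus-virtualization reduction the paper uses; if you want a proof that is genuinely independent of \cite{SS2015}, you still owe a real proof of the $\ell=1$ lemma.
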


\begin{thm}
Let $\lambda,\mu \in P^+$ be such that $\lambda = \sum_{a \in I} c_a \Lambda_a$. Then
\[
\RC(\mu) \otimes \RC(\lambda) \iso \bigoplus_{\substack{(\nu, J) \in \RC(\mu) \\ \min\{ \min J_i^{(a)} \mid i \in \ZZ_{>0}\} \ge -c_a\\ \mathrm{for\,all\,} a\in I}} \RC\bigl( \lambda + \wt(\nu, J) \bigr).
\]
\end{thm}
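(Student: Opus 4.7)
The plan is to identify the highest weight elements of the tensor product $\RC(\mu) \otimes \RC(\lambda)$ and match them with the indexing set on the right-hand side. Once the highest weight elements are classified, the decomposition is an immediate consequence of the fact that every connected component of a tensor product of highest weight crystals is itself isomorphic to an irreducible highest weight crystal of the appropriate weight. Since Theorem~\ref{thm:main_result} identifies $\RC(\nu) \iso B(\nu)$ for every $\nu \in P^+ \cup \{\infty\}$, the tensor product $\RC(\mu) \otimes \RC(\lambda)$ behaves as $B(\mu) \otimes B(\lambda)$, and the component through each highest weight element $(\nu, J) \otimes u_\lambda$ is isomorphic to $B\bigl(\lambda + \wt(\nu, J)\bigr) \iso \RC\bigl(\lambda + \wt(\nu, J)\bigr)$.

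First, I would identify the highest weight element $u_\lambda \in \RC(\lambda)$ with the empty rigged configuration $(\nu_\emptyset, J_\emptyset)$, so that $\wt(u_\lambda) = \lambda$, $\varepsilon_a(u_\lambda) = 0$, and hence $\varphi_a(u_\lambda) = \langle h_a, \lambda \rangle = c_a$. Then, applying the tensor product rule \eqref{eq:tensor_product_rule_e}, for any $(\nu, J) \in \RC(\mu)$ we have
\[
e_a\bigl((\nu, J) \otimes u_\lambda\bigr) = \begin{cases} e_a(\nu, J) \otimes u_\lambda & \text{if } \varepsilon_a(\nu, J) > c_a, \\ (\nu, J) \otimes e_a u_\lambda = 0 & \text{if } \varepsilon_a(\nu, J) \le c_a. \end{cases}
\]
Therefore $(\nu, J) \otimes u_\lambda$ is a highest weight element of the tensor product crystal if and only if $\varepsilon_a(\nu, J) \le c_a$ for every $a \in I$.

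Next, I would invoke Proposition~\ref{prop:ep_phi}: writing $x_a$ for the smallest rigging in $(\nu, J)^{(a)}$ (with the convention that $\varepsilon_a(\nu, J) = 0$ when $(\nu, J)^{(a)}$ has no parts), we have $\varepsilon_a(\nu, J) = -\min\{0, x_a\}$. Since $c_a \ge 0$, the inequality $-\min\{0, x_a\} \le c_a$ is equivalent to $x_a \ge -c_a$ (with both sides vacuous when $(\nu, J)^{(a)}$ is empty). This is exactly the combinatorial condition indexing the direct summands on the right-hand side of the theorem.

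Finally, to assemble the isomorphism, I would note that $\wt\bigl((\nu, J) \otimes u_\lambda\bigr) = \lambda + \wt(\nu, J)$ by \eqref{eq:tensor_product_rule_weight}, so the connected component of the tensor product through $(\nu, J) \otimes u_\lambda$ is isomorphic to $B\bigl(\lambda + \wt(\nu, J)\bigr) \iso \RC\bigl(\lambda + \wt(\nu, J)\bigr)$ by Theorem~\ref{thm:main_result}. Summing over all highest weight elements yields the claimed decomposition. I expect no serious obstacle: the result is essentially the standard highest-weight element description of $B(\mu) \otimes B(\lambda)$, and Proposition~\ref{prop:ep_phi} together with Theorem~\ref{thm:main_result} does all the combinatorial and crystal-theoretic work. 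The only subtlety worth flagging is the edge case where some $(\nu, J)^{(a)}$ is empty, which must be handled consistently in both the statement and the application of Proposition~\ref{prop:ep_phi}.
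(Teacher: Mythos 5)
Your proposal is correct and follows essentially the same route as the paper's own proof: classify the highest weight elements $(\nu,J)\otimes u_\lambda$ of the tensor product via the tensor product rule, translate the condition $\varepsilon_a(\nu,J)\le c_a$ into the rigging inequality using Proposition~\ref{prop:ep_phi}, and read off the decomposition. Your extra care with the weight computation and the empty-partition edge case is consistent with, and slightly more explicit than, what the paper does.
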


\begin{proof}
Recall that, if $B$ is a crystal, then $v \in B$ is called highest weight if $e_av = 0$ for all $a\in I$.  By \cite[\S4.5]{K95} (or directly from \eqref{eq:tensor_product_rule}), the highest weight elements of $\RC(\mu)\otimes \RC(\lambda)$ are precisely those elements of the form $(\nu,J) \otimes (\nu_\lambda,J_\lambda)$, where $(\nu_\lambda,J_\lambda)$ is the highest weight rigged configuration in $\RC(\lambda)$ and $\varepsilon_a(\nu,J) \le \langle h_a, \lambda\rangle$ for all $a\in I$.  Since $\lambda = \sum_{a\in I} c_a\Lambda_a$, we seek those $(\nu,J) \in \RC(\mu)$ such that $\varepsilon_a(\nu,J) \le c_a$ for all $a\in I$.  By Proposition \ref{prop:ep_phi}, we have $\varepsilon_a(\nu,J) = -\min\{0,x\}$, where $x$ is the smallest rigging in $(\nu,J)^{(a)}$.  The smallest rigging in $(\nu,J)^{(a)}$ is 
\[
\min\bigl\{ \min\{\ell : \ell \in J_i^{(a)}\} : i \in \ZZ_{>0} \bigr\},
\] 
so we require
\begin{align*}
c_a &\ge \varepsilon_a(\nu,J) \\
&= -\min\Bigl\{ 0, \min\bigl\{ \min\{\ell : \ell \in J_i^{(a)}\} : i \in \ZZ_{>0} \bigr\} \Bigr\} \\
&\ge -\min\bigl\{ \min\{\ell : \ell \in J_i^{(a)}\} : i \in \ZZ_{>0} \bigr\},
\end{align*}
which is what we set out to prove.
\end{proof}

\begin{ex}
Suppose $\g$ is of type $A_2$ and let $\lambda = \Lambda_1+\Lambda_2$ and $\mu = \Lambda_1$.  Since $B(\mu)$ is the crystal
\[
\varnothing
\ \ 
\varnothing\ \ 
\xrightarrow{\ \ \ 1 \ \ \ }\ \ 
\begin{tikzpicture}[scale=.35,anchor=top,baseline=-17.5]
 \rpp{1}{-1}{-1}
\end{tikzpicture}\ \ 
\varnothing\ \
\xrightarrow{\ \ \ 2 \ \ \ }\ \ 
\begin{tikzpicture}[scale=.35,anchor=top,baseline=-17.5]
 \rpp{1}{0}{0}
 \begin{scope}[xshift=4cm]
 \rpp{1}{-1}{-1}
 \end{scope}
\end{tikzpicture},
\]
it follows that
\[
B(\mu)\otimes B(\lambda) \cong B(2\Lambda_1+\Lambda_2) \oplus B(2\Lambda_2) \oplus B(\Lambda_1).
\]
\end{ex}

Recall that rigged configurations in finite type can be considered as classical components of $U_q'(\g)$-crystals, where $\g$ is of affine type, isomorphic to $\bigotimes_{i=1}^N B^{r_i,1}$. We note that there is an algorithm to construct all classically highest weight $U_q'(\g)$-rigged configurations given by Kleber in simply-laced types~\cite{kleber98} and extended to all other types by using virtualization~\cite{OSS03III}. It would be interesting to determine which nodes of the Kleber tree correspond to the highest weight elements in $B(\lambda) \otimes B(\mu)$ and more generally $B(\lambda_1) \otimes \cdots \otimes B(\lambda_{\ell})$.

\section*{Acknowledgements}
The authors would like to thank the anonymous referee for very helpful comments, which improved both the quality and clarity of this manuscript.

\bibliography{RC_LR}{}
\bibliographystyle{amsalpha}

\end{document}